\newtheorem{theorem}{Theorem}[section]
\newtheorem{example}[theorem]{Example}
\newtheorem{definition-lemma}[theorem]{Definition-Lemma}
\newtheorem{defn}[theorem]{Definition}
\newtheorem{remark}[theorem]{Remark}
\newtheorem*{conconj}{Cone conjecture}
\newtheorem*{definv}{Deformation invariance of plurigenera}
\newtheorem*{canonical bundle}{Canonical bundle formula}
\title{Invariance of Plurigenera and boundedness for Generalized  Pairs}
\author{Stefano Filipazzi, Roberto Svaldi}
\newcommand{\Q}{\mathbb{Q}}			
\newcommand{\R}{\mathbb{R}}			
\newcommand{\C}{\mathbb{C}}			
\newcommand{\N}{\mathbb{N}}			
\newcommand{\rar}{\rightarrow}		
\newcommand{\drar}{\dashrightarrow}		
\newcommand{\Xf}{\mathcal{X}}		
\newcommand{\Yf}{\mathcal{Y}}		
\newcommand{\Wf}{\mathcal{W}}		
\newcommand{\Bf}{\mathcal{B}}		
\newcommand{\Mf}{\mathcal{M}}		
\newcommand{\Df}{\mathcal{D}}		
\newcommand{\Af}{\mathcal{A}}		
\newcommand{\Hf}{\mathcal{H}}		
\DeclareMathOperator{\Proj}{Proj}		
\DeclareMathOperator{\Supp}{Supp}		
\DeclareMathOperator{\vol}{vol}		
\DeclareMathOperator{\rk}{rk}			
\DeclareMathOperator{\coeff}{coeff}	
\DeclareMathOperator{\coloneqq}{:=}
\def\O#1.{\mathcal {O}_{#1}}			
\def\pr #1.{\mathbb P^{#1}}				
\def\af #1.{\mathbb A^{#1}}				
\def\ses#1.#2.#3.{0\to #1\to #2\to #3 \to 0}		
\def\xrar#1.{\xrightarrow{#1}}			
\def\K#1.{K_{#1}}						
\def\bM#1.#2.{\mathbf{M}_{#1,#2}}				
\def\bL#1.#2.{\mathbf{L}_{#1,#2}}				
\def\subs#1.{_{#1}}						
\def\sups#1.{^{#1}}						
\DeclareMathOperator{\Diff}{Diff}		
\address{S.~Filipazzi, \textsc{UCLA Mathematics Department,
Box 951555,
Los Angeles, CA 90095-1555, USA}} \email{filipazzi@math.ucla.edu}
\address{R.~Svaldi, \textsc{EPFL, SB MATH-GE, MA B1 497 (B\^{a}timent MA), Station 8, CH-1015 Lausanne, Switzerland}}
\email{roberto.svaldi@epfl.ch}
\subjclass[2010]{14E05, 14E25, 14E30, 14D20, 14J10.}
\keywords{generalized pairs, invariance of plurigenera, boundedness}
\begin{document}
\setcounter{tocdepth}{1}

\begin{abstract}
In this note, we survey some recent developments in birational geometry concerning the boundedness of algebraic varieties.
We delineate a strategy to extend some of these results to the case of generalized pairs, first introduced by Birkar and Zhang, when the associated log canonical divisor is ample, and the volume is fixed.
In this context, we show a version of deformation invariance of plurigenera for generalized pairs.
We conclude by discussing an application to the boundedness of varieties of Kodaira dimension $\kappa(X)=\dim(X)-1$.
\end{abstract}

\thanks{SF was supported by a Graduate Research Fellowship awarded by the University of Utah. Partial support was also provided by NSF research grants no: DMS-1300750, DMS-1265285 and by a grant from the Simons Foundation; Award Number: 256202.
RS was partially supported by funding from the European Union's Seventh Framework Programme (FP7/2007-2013)/ERC Grant agreement no. 307119, which also provided support for the visit of SF to SISSA, Trieste and by Churchill College, Cambridge.
He was also partially supported from the European Union's Horizon 2020 research and innovation programme under the Marie Sk\l{}odowska-Curie grant agreement No. 842071.}

\maketitle

\tableofcontents

\section{Introduction}
Throughout this paper, we work over an algebraically closed field of characteristic 0, for instance, the complex number field, $\mathbb{C}$.

One of the main goals in algebraic geometry is to realize a sufficiently synthetic albeit complete classification of projective varieties, that is, subsets of projective space defined by the vanishing of finitely many homogeneous polynomials.
To this end, there are two possible distinct approaches: either by identifying two distinct varieties if they are isomorphic or by introducing the notion of birational equivalence. Two algebraic varieties are birationally equivalent (or simply, birational) when they both contain isomorphic dense open sets.
When algebraic varieties are birational, many numerical and geometrical quantities that capture their structure are preserved. 
Hence, birational equivalence is a sufficiently coarse equivalence relation among geometrical objects. 
At the same time, it allows more flexibility than just the classification by isomorphism type: we are free to modify the variety under scrutiny as long as a dense open set is left untouched; the new variety thus obtained is birational to the original one.
Indeed, this is the leitmotif of the whole birational classification: among all varieties in a given birational class, we would like to find one whose geometric features are the best possible.
Of course, part of the problem is to make sense of what the expression ``best possible'' means in the previous sentence.

A very important role in this task is played by the canonical bundle of a normal variety.
For a smooth variety, that is just defined as the determinant of the cotangent bundle.
In the singular case, normality implies that the smooth locus has a complement of codimension at least two within the variety; thus, we can extend the canonical bundle from the smooth locus to the whole variety in a natural way -- although it will no longer be a line bundle, but rather a Weil divisorial sheaf.

Starting with Mori in the 1980s and then continuing with many other important contributors of birational geometry up until this very day, it is very well understood that one way to construct a preferred representative in the birational equivalence class of a projective variety can be achieved by making the canonical divisor ``as positive as possible''.
In more precise terms, this means that one would like to find a birational model of a given projective variety on which the canonical divisor becomes numerically effective.
That birational model is then called a {\it minimal model}.
To be able to construct minimal models, it is inevitable to consider singular varieties within a birational equivalence class, as it is already clear in dimension three -- unlike the case of surfaces.
Nonetheless, it is enough to consider a well-behaved class of singularities, which has now been intensively studied, cf. \S \ref{sect.bound}, \ref{g.pair.sect}.

One of the main open problems in birational geometry is whether minimal models do exist.
Indeed, they are conjectured to exist if and only if the varieties within a given birational equivalence class are not covered by rational curves.
Such varieties are said to be uniruled.
A series of conjectures, known as the Minimal Model Program, predicts that minimal model exists for non-uniruled varieties with mild singularities and moreover provides a conjectural algorithmic construction for them.
More generally, the Minimal Model Program predicts that, up to some special birational equivalences, each projective variety decomposes into iterated fibrations with general fibers of $3$ basic types:
\begin{itemize}
	\item {\it log Fano varieties}: varieties with ample anti-canonical bundle;
	\item {\it $K$-trivial varieties}: varieties with torsion canonical bundle; and
	\item {\it log canonical models}: varieties with ample canonical bundle.
\end{itemize}  
The classification scheme then proceeds with the study of the geometry of these three special types of varieties.
In particular, under the perspective of the minimal model program, the classification process can be further subdivided into two main goals:
\begin{enumerate}
\item the construction of moduli spaces for varieties in each of the three key types just introduced; and
\item the study of the structure of these moduli spaces.
\end{enumerate}

In particular, the latter task should be thought in connection with the study of fibrations whose general fibers fall into one of the three fundamental types above.
In fact, given a fibration $f \colon X \to Y$ where the general fiber is either one of the three basic types introduced above, assuming the existence of a moduli space $\mathcal{M}$ parametrizing the isomorphism types of the generic geometric fibers, then by the definition of a moduli functor there is an induced rational map $Y \dashrightarrow \mathcal{M}$ (or rather a rational map to the coarse moduli space of $\mathcal{M}$), associating to a sufficiently general point $y \in Y$ the class of isomorphism of the fibre $X_y$.
Hence, knowing the structure of the moduli space can help us understand the structure of the fibration $f$. 

The process of constructing moduli spaces for a given class of algebraic varieties has several steps.
The first step is to show that the chosen class of varieties is bounded, i.e., it can be parametrized by a finite number of parameters.
For instance, if we look at smooth projective curves, once we fix the genus $g \geq 2$, it has been known since Riemann that these vary in a $(3g-3)$-dimensional family.
In this case, it is easy to see that the bi-canonical linear system provides the desired embedding.
\newline
Once boundedness is settled, the next step is to find a functorial construction for a parameter space.
As it is often easier to work with compact (or projective) varieties, we would like our parameter space to be compact.
On the other hand, we would like that the extra points needed to obtain a compact parameter space were related to our original classification problem -- that is, we would like to define a functor whose moduli space is proper.
The new points should represent the limit of well-behaved degenerations of families of varieties in the chosen class.
This whole circle of ideas leads to the construction of a moduli functor and eventually of a moduli space.
Deligne and Mumford, \cite{MR0262240}, showed that a moduli space of curves of genus $g \geq 2$ exists and it can be naturally compactified by considering so-called stable curves, nodal curves with ample canonical class.

In this note, we survey some of the recent techniques and results that have emerged in very recent years in relation to the study of boundedness for algebraic varieties.   
Moreover, we explain a possible attempt at extending these results to the class of generalized pairs, cf. \S \ref{g.pair.sect}, that was recently introduced by Birkar and Zhang \cite{BZ}. 
A result of this type would, for example, provide boundedness for the images of the Iitaka fibrations of varieties of intermediate Kodaira dimension.
As a propaedeutic step, we show that the dimensions of the spaces of sections of positive multiples of the log divisors associated with generalized pairs are constant in families, see Theorem \ref{deformation invariance}.
This is a crucial step in the completion of the plan that we detail for the boundedness of ample generalized pairs with fixed volume.

The structure of the paper is as follows: 
in Section \ref{sect.bound} we introduce the formal definition of boundedness, and we illustrate some of the recent progress on the problem, as well as some of the open challenges; 
in Section \ref{g.pair.sect}, we discuss the notion of generalized pair and explain how that plays an important role in boundedness problems for minimal models; 
Section \ref{def.inv.sect} is devoted to the proof of the invariance of plurigenera for big and nef klt generalized pairs; 
finally, in Section \ref{fibr.sect}, we show a boundedness result of birational type for elliptic fibrations, and we discuss its relation to a famous conjecture of Kawamata and Morrison.

\subsection*{Acknowledgements}
RS would like to thank the organizers of the Conference ``Moduli spaces in Algebraic Geometry and Applications'' for the opportunity to speak in Campinas and the extremely enjoyable and productive environment that they were able to create for the occasion.
\newline
The authors would like to thank Caucher Birkar, Paolo Cascini, Gabriele Di Cerbo, Christopher Hacon, James M\textsuperscript{c}Kernan, Luca Tasin, and Chenyang Xu for many useful conversations on the topics of this work over the years.
Part of this work was completed during two visits of SF to RS at SISSA, Trieste and the University of Cambridge.
SF would like to thank both institutions for the hospitality and the nice working environment.

\section{A tour of boundedness} \label{sect.bound}
\subsection*{Boundedness}
When we consider a set $\lbrace X_i \rbrace _{i \in I}$ of varieties, the first step towards constructing a well-behaved parameter space is making sure that they can all be embedded in the same projective space $\pr N.$ in a controlled way.
The theory of Hilbert schemes suggests that, if there are only finitely many possible Hilbert polynomials for the $X_i$ with respect to such an embedding into $\pr N.$, then the $X_i$ will naturally be the fibers of a family of varieties parametrized by a scheme of finite type.
The notion of boundedness is simply a generalization of this idea.

\begin{defn} \label{def boundedness}
{\em
A set of projective varieties $\lbrace X_i \rbrace _{i \in I}$ is said to be {\it bounded} if there exists a projective morphism of algebraic varieties $\mathcal{X} \rar T$, where $T$ is of finite type, such that for any $X \in \lbrace X_i \rbrace _{i \in I}$ there exists a closed point $t \in T$ for which the fiber $\mathcal{X}_t$ is isomorphic to $X$.
}
\end{defn}
When a set of varieties is bounded, we should expect that, upon partitioning them into finitely many subsets, they share many geometric features.
For example, if all of the $X_i$ are smooth and of the same dimension, then they only have finitely many possible distinct underlying topological spaces, as implied by Ehresmann's theorem, see \cite{Voisin}*{Theorem 9.3}.

As we work with reduced and irreducible schemes, if we fix dimension and degree of subvarieties of $\pr N.$, the theory of Chow varieties guarantees that they form a bounded family in the sense of Definition \ref{def boundedness}, see \cite[\S 1.3]{k.book96}.
Thus, one general strategy to prove that a set of varieties $\{ X_i\}_{i\in I}$ of fixed dimension $d$ is bounded is to find a very ample line bundle on each $X_i$ that embeds it with degree bounded from above in a projective space of bounded dimension.
This is a first hint to the fact that, when we want to construct moduli spaces or more generally address boundedness questions regarding algebraic varieties, we need to fix some invariants.
We have already discussed the case of curves in the introduction: there, it suffices to fix the genus $g$ of a smooth projective curve in order to construct a good moduli functor with proper moduli space $\overline{\mathcal{M}}_g$.
The genus $g$ is a topological invariant of smooth projective curves, but it can also be readily read off from the degree of the cotangent bundle $\mathcal{O}_C(K_C)$ of a curve $C$: $\deg \mathcal{O}_C(K_C) = 2g(C)-2$.
As the linear system $|2K_C|$ embeds $C$ in $\mathbb{P}(H^0(C, \mathcal{O}_C(2K_C))^\vee) \simeq \mathbb{P}^{3g-4}$, we have reproven the boundedness of smooth curves of fixed genus.
\subsection*{Volume}
Recall that the \emph{volume} of a Cartier divisor $D$ on a projective variety $Y$ is defined as 
\[
\vol(Y,D) \coloneqq \limsup_{m \to \infty} \frac{h^0(Y,\O Y.(mD))}{m^n/n!},
\]
where $n = \dim (Y)$. If $D$ is a $\Q$-Cartier $\Q$-divisor, we set $\vol(Y,D) \coloneqq \frac{\vol(Y,kD)}{k^n}$, where $kD$ is Cartier.
Hence, in the case of curves, as we have $\vol(C,\K C.)=2 g(C)-2$, we can think of $\mathcal{M}_g$ as being obtained by putting a constraint on the volume of the canonical divisor.
Unlike the topological genus of a Riemann surface, the perspective given by the volume is suitable for a generalization.
In particular, starting from a birational viewpoint, we may consider smooth $n$-dimensional projective varieties of general type, that is, varieties with the property that the volume of the canonical bundle is positive.
An equivalent characterization is given by requiring that the Iitaka fibration, cf. \cite[Theorem 2.1.33]{LAZ1}, is a birational map.
The expectation is that general type varieties provide the generalization in the birational world of varieties with ample canonical bundle.
We will explain below how this intuition is actually well-rooted in results from the Minimal Model Program.
Much in the same vein, we could do something similar for smooth $n$-dimensional Fano or $K$-trivial varieties and their birational equivalence classes.
For the purpose of this note, though, we will only focus on the general type case.

Unlike the case of curves, higher dimensional varieties have interesting birational geometry.
Already by blowing up smooth points on surfaces, we realize that fixing the dimension $n$ and the volume $v$ for the canonical divisor is not enough to obtain a quasi-projective parameter space.
Indeed, while the isomorphism type of a curve is the same as the birational equivalence type, in dimension at least 2, any birational equivalence class contains infinitely many non-isomorphic varieties.
For varieties of general type, this is reflected in the fact that the canonical bundle provides just a birational polarization: the condition that the volume of the canonical divisor is positive is much weaker than requiring it to be ample.
On the other hand, we cannot hope to find a smooth birational model $X'$ of a smooth general type variety $X$ with $\K X'.$ ample; this is already evident for surfaces of general type, where we encounter ADE singularities when attempting to construct the canonical model, cf. \cite[Chapter 4]{KM}. 

Rather than regarding a rich birational geometry and the presence of singularities as a problem, we can try to take advantage of the flexibility that these provide.
In particular, we can introduce weaker notions of boundedness that work for any variety in a given birational equivalence class.

\begin{defn} \label{def b boundedness}
{\em
A set of projective varieties $\lbrace X_i \rbrace _{i \in I}$ is said to be {\it birationally bounded} if there exists a projective morphism of algebraic varieties $\mathcal{X} \rar T$, where $T$ is of finite type, such that for any $X \in \lbrace X_i \rbrace _{i \in I}$ there exists a closed point $t \in T$ for which the fiber $\mathcal{X}_t$ is birationally equivalent to $X$.
}
\end{defn}

We have already discussed how, in order to construct moduli spaces, we often have to fix some numerical invariants within a given class of projective varieties.
For smooth varieties of general type, the two most natural invariants to fix are the dimension $n$ together with the volume of the canonical bundle $v$.
Once these invariants are specified, we can ask whether the varieties satisfying these constraints are birationally bounded.
It turns out that fixing the dimension $n$ and the volume $v=\vol(X,\K X.)$ is enough to achieve birational boundedness of smooth projective varieties of general type, see \cite[Corollary 1.2]{HM06}.
Roughly speaking, fixing the volume guarantees that a fixed multiple $|m \K X.|$ defines a birational map to a variety embedded into a fixed projective space $\pr N.$.
Furthermore, the bound on $\vol(X,\K X.)$ also gives a bound on the degree of the image of this birational map.

Once birational boundedness is achieved, it is natural to wonder whether there is a natural representative in each birational class of varieties of general type for which the canonical bundle is ample.
More precisely, can we choose one specific such representative within each birational class of varieties of general type and achieve boundedness for these models?
If we do not want to leave the realm of smooth varieties, we have already seen that this question has a positive answer just up to dimension 2.
On the other hand, if we are willing to admit varieties with mild singularities, the Minimal Model Program provides us with a positive answer in any dimension.
More precisely, if $X$ is smooth with $\vol(X,\K X.) > 0$, there exists a birational contraction $X \drar X'$ such that $X'$ has canonical singularities, $\K X'.$ is ample and $\vol(X,\K X.) = \vol(X',\K X'.)$.
The variety $X'$ is called the \emph{canonical model}.
It is unique and is characterized as $X' = \Proj(\bigoplus_{m \geq 0} H^0(X,m\K X.))$, cf. \cite{BCHM}.

\subsection*{Log pairs}
If we adopt the perspective of the Minimal Model Program, we can inquire boundedness in broader generality.
In the context of the classification, it is often more convenient to work with a slightly more general type of objects, namely, {\it log pairs} (or simply {\it pairs} for short).
A pair $(X, \Delta)$ consists of a normal variety $X$ and an effective $\R$-divisor $\Delta$ with coefficients in $(0,1]$ on $X$ such that $K_X+\Delta$ is $\mathbb{R}$-Cartier.
Such pairs appear quite naturally when generalizing the adjunction formula to singular varieties: when $X$ is a mildly singular hypersurface in a mildly singular variety $Y$, then the classical adjunction formula $(K_Y+X)|_X = K_X$ often fails to hold. 
One then needs a correction term in the form of an effective divisor, that is, the adjunction formula looks like $(K_Y+X)|_X=K_X+\Delta$, for some $\Delta \geq 0$ on $X$.
Given a log resolution $f\colon X' \rightarrow X$ of the log pair $(X, \Delta)$, we write
\[
\K X'. +\Delta' =f^*(K_X+\Delta),
\]
where $\Delta'$ is the unique divisor for which $f_\ast (\K X'.+\Delta') = K_X+\Delta$.
Thus, $\Delta'$ is the sum of the strict transform $f^{-1}_\ast \Delta$ of $\Delta$ on $X'$ and a divisor completely supported on the exceptional locus of $f$. 
Denoting by $\mu_D (\Delta')$ the multiplicity of $\Delta'$ along a prime divisor $D$ on $X'$, for a non-negative real number $\epsilon$, the log pair $(X,\Delta)$ is called
\begin{itemize}
\item[(a)] \emph{$\epsilon$-Kawamata log terminal} (\emph{$\epsilon$-klt}, in short) if $\mu_D (\Delta')< 1-\epsilon$ for all $D \subset X'$;
\item[(b)] \emph{$\epsilon$-log canonical} (\emph{$\epsilon$-lc}, in short) if $\mu_D (\Delta')\leq  1-\epsilon$ for all $D \subset X'$;
\item[(c)] \emph{terminal} if  $\mu_D (\Delta')< 0$ for all $f$-exceptional $D \subset X'$ and all possible choices of $f$;
\item[(d)] \emph{canonical} if  $\mu_D (\Delta')\leq 0$ for all $f$-exceptional $D \subset X'$ and all possible choices of $f$.
\end{itemize}

The case $0$-lc (respectively $0$-klt) case coincides with canonical (resp. terminal) singularities, and we omit it from the notation.
We can extend the discussion of the previous subsection to the case of pairs.
More precisely, we can consider log canonical pairs $(X,\Delta)$ of log general type, that is, $\vol(X, K_X+\Delta) >0$. 
We may try to fix certain numerical invariants to determine whether such a class of pairs is bounded.
Again, a natural choice of invariants to fix is $\dim (X)$ and the log canonical volume $\vol(X,\K X. + \Delta)$. 
Nonetheless, we also need to put some technical (yet natural) constraints on the possible coefficients of $\Delta$.
Once these are fixed, we can ask whether these pairs are birationally bounded.
Let us notice that, when we talk about the boundedness of pairs, we require that the supports of the boundaries deform in the bounding family, cf. \cite[2.1 Notations and Conventions]{HMX18}.

\subsection*{Boundedness for varieties of general type}
Our main reason to introduce singular varieties and pairs is that singularities are unavoidable when running the Minimal Model Program in order to realize (log) canonical models.
If our initial input is a smooth variety (respectively, a klt pair, a log canonical pair), the canonical model (resp. log canonical model) is a canonical variety (resp., a klt pair, a log canonical pair).
On the other hand, already in the case of algebraic curves, non-normal degenerations are needed to compactify $\mathcal{M}_g$ in a modular way and obtain $\overline{\mathcal{M}}_g$.
In higher dimension, the correct generalization of this notion is given by so-called semi-log canonical pairs.
Roughly speaking, semi-log canonical pairs are the generalization in higher dimension of stable pointed curves, and it is natural to address boundedness of these, see \cite{koll.mod}.

In this generality, Hacon, M\textsuperscript{c}Kernan and Xu have proved the following boundedness result.
\begin{theorem} 
\cite[Theorem 1.2.1]{HMX18} 
\label{boundedness hmx}
Fix $n \in \N$, $d > 0$ and a DCC set $I \subset [0,1] \cap \Q$.
Then, the set $\mathfrak{F}_{\rm slc}(n,I,d)$ of pair $(X, \Delta)$ such that
\begin{enumerate}
    \item $(X,\Delta)$ is a semi-log canonical pair,
    \item $\dim( X) =n$, 
    \item $\K X. + \Delta$ is ample,
    \item $\vol(X,\K X. + \Delta)=d$, and 
    \item $\coeff(\Delta) \subset I$,
    \end{enumerate}
 is bounded.
\end{theorem}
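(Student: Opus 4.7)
The plan is to reduce the semi-log canonical case to the log canonical case via normalization, and then to prove boundedness for lc pairs by combining effective birationality of pluri-log-canonical maps with a relative MMP construction of log canonical models in families.

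For the reduction from slc to lc, I would pass to the normalization $\pi \colon (X^\nu, \Delta^\nu + D^\nu) \rar (X, \Delta)$, where $D^\nu$ is the conductor divisor. This yields a disjoint union of lc pairs of dimension $n$; standard adjunction shows that the coefficients of $\Delta^\nu + D^\nu$ still lie in a DCC set depending only on $I$, while the log canonical volume is preserved. Boundedness of the normalized pieces would then imply boundedness of the original slc pairs via Koll\'ar's gluing theory: an slc structure is recovered from its normalization together with an involution of the conductor satisfying certain compatibilities, and once the normalizations are bounded, these gluing data naturally lie in a family of finite type.

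The heart of the proof is then the lc case. The first step is to establish birational boundedness. For this one needs \emph{effective birationality}, namely the existence of $m = m(n, I, d) \in \N$ such that for every pair $(X, \Delta)$ in the family the linear system $|m(\K X. + \Delta)|$ defines a birational map onto its image. Such a uniform $m$ can be extracted by combining ACC for log canonical thresholds, DCC for volumes of lc pairs of log general type, and a careful analysis of non-klt centres with controlled coefficients, all inputs belonging to the toolbox developed by Hacon, M\textsuperscript{c}Kernan and Xu. Once such an $m$ is fixed, the condition $\vol(X, \K X. + \Delta) = d$ forces the image of the pluri-log-canonical map to lie in a fixed $\pr N.$ with bounded degree, yielding birational boundedness in the sense of Definition \ref{def b boundedness} through the theory of Chow varieties.

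To upgrade birational boundedness to actual boundedness, I would take a total space $\mathcal{X} \rar T$ of finite type witnessing birational boundedness, stratify and refine $T$ so that a flat family of boundaries $\mathcal{D}$ matches the $\Delta_i$ on birational models of the fibers, and then run the relative log MMP for $(\mathcal{X}, \mathcal{D})/T$. Since $\K X. + \Delta$ is ample on every fiber and the log canonical model is unique, the fibers of the resulting relative log canonical model must recover the original pairs $(X_i, \Delta_i)$, giving a family of finite type as required by Definition \ref{def boundedness}. The main obstacle is controlling the behaviour of log canonical models under base change: one must show that formation of the log canonical model commutes with restriction to closed points of $T$, which in turn reduces to the constancy of dimensions of pluri-log-canonical sections in families. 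This is precisely the role played by deformation invariance of log plurigenera, whose generalized-pair analogue is the content of Theorem \ref{deformation invariance} later in the paper, and it illustrates why such invariance statements are a crucial input to boundedness results.
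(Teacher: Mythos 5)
Your proposal follows essentially the same strategy as the paper's discussion of this result (which is quoted from \cite{HMX18} and only sketched here): reduction from slc to lc via normalization and Koll\'ar's gluing theory, birational boundedness from effective birationality and the DCC/ACC package, and the upgrade to honest boundedness by running a relative MMP over the parameter space with deformation invariance of plurigenera as the key input. The only point worth adding is that the DCC for volumes is also what controls the slc-to-lc reduction itself, since the total volume $d$ distributes over the components of the normalization and one needs finiteness of the possible decompositions to bound each lc piece separately.
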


Let us highlight some of the main ideas in the proof of Theorem \ref{boundedness hmx}:
one can reduce from the case of semi-log canonical pairs to that of log canonical pairs, thanks to Koll\'ar's gluing theory, cf. \cite[Chapter 5]{Kol13}, and to another deep result of Hacon, M\textsuperscript{c}Kernan, and Xu, who proved a structure theorem for the possible volumes $\vol(X,\K X. + \Delta)$ \cite[Theorem 1.3]{HMX14b}: indeed, they show that such set satisfies the Descending Chain Condition (in short, DCC), i.e., any descending sequence with values in the set is eventually constant.
This last result crucially relies on the coefficients of the boundary $\Delta$ satisfying in turn the DCC. 
Once we can reduce to the lc case, as $\vol(X,\K X. \Delta)= v$ is fixed, we obtain a birationally bounded family $(\mathcal{X},\mathcal{B}) \rar T$, in which the lc pairs $(X, \Delta)$ satisfying the conditions of the theorem fit, up to birational isomorphism.
In order to conclude, one would like to run a suitable Minimal Model Program $\mathcal{X} \drar \mathcal{X}'$ over $T$ to obtain a family of log canonical models, i.e., exactly those pairs for which we wish to show boundedness.
Hacon, M\textsuperscript{c}Kernan, and Xu showed that this indeed holds.
One of the key ingredients in their strategy is the deformation invariance of the plurigenera $h^0(\mathcal{X}_t,m(\K \mathcal{X}_t. + \mathcal{B}_t))$, which guarantees that the aforementioned Minimal Model Program $\mathcal X \dashrightarrow \mathcal{X}'$ preserves the pluricanonical ring fiber by fiber.

\section{The canonical bundle formula and generalized pairs}\label{g.pair.sect}

\subsection*{Varieties of intermediate Kodaira dimension} The Minimal Model Program predicts that every variety can be birationally decomposed as iterated fibrations of three fundamental types of varieties: varieties of general type, $K$-trivial varieties and Fano-type varieties.
A similar phenomenon is predicted in the case of pairs.
Therefore, in order to address boundedness questions about more complicated classes of varieties, it is necessary to settle the boundedness of the three key building blocks.

The work of Hacon, M\textsuperscript{c}Kernan, and Xu establishes boundedness results for varieties of general type, while that of Birkar does the same in the Fano-type case \cites{Bir16a,Bir16b}.
Some recent results were also obtained in the case of $K$-trivial varieties, cf. \cites{dCS17, CDHJS, BdCS}.
In between varieties of general type and $\K .$-trivial ones, we have varieties of intermediate Kodaira dimension.
More precisely, we have varieties $X$ for which $h^0(X,m\K X.)$ admits an asymptotic estimate as $C_1 m \leq h^0(X,m \K X.) \leq C_2 m^{\dim (X) - 1}$ for $m$ large and divisible.

Under the perspective of the Minimal Model Program, we can regard varieties of intermediate Kodaira dimension as fibrations of $K$-trivial varieties over bases of general type.
This decomposition goes as follows.
Let $X$ be one of these varieties, and assume it has klt singularities.
For simplicity, assume that $\K X.$ is semi-ample.
This is a natural assumption in birational geometry, as it is conjectured that every klt variety $Y$ of non-negative Kodaira dimension admits a birational contraction $Y \drar Y'$ such that $\K Y'.$ is semi-ample \cite[Conjecture 2.8, Conjecture 5.7]{HM10}.
Then, as $|l \K X.|$ is basepoint-free for some $l \gg 0$, we have a naturally induced morphism $f \colon X \rar Z$, the so called \emph{Iitaka fibration}, to a normal projective variety $Z$.
By construction, we have $Z = \mathrm{Proj}(\bigoplus \subs m \geq 0. H^0(X,\O X.(m\K X.)))$ and $\K X. \sim \subs \Q. f^*L_Z$, where $L_Z$ is an ample $\Q$-Cartier divisor on $Z$.
By repeated adjunction, we have $\K X_z. = \K X. | \subs X_z.$, where $X_z$ is a general fiber of $f$.
In particular, we have that $\K X_z. \sim \subs \Q. 0$.
Thus, the general fibers of $f$ are $K$-trivial varieties.
On the other hand, it is a priori unclear how to regard $Z$ as a variety of general type, since $\K Z.$ may not be big in general.
The \emph{canonical bundle formula}, as discussed below in Remark \ref{remark cbf}, provides the right perspective on this phenomenon.
Indeed, we can (almost) canonically find an effective divisor $\Delta_Z$ such that $(Z,\Delta_Z)$ is klt and $\K Z. + \Delta_Z \sim_\Q L_Z$.
Since $L_Z$ is ample, then $(Z,\Delta_Z)$ is a pair of general type.

\subsection*{The canonical bundle formula}
Let $(X,B)$ be a projective klt pair, and let $f \colon X \rar Z$ be a morphism with connected fibers.
Assume there is a $\Q$-Cartier divisor $L_Z$ on $Z$ such that $\K X. + B \sim_\Q f^*L_Z$.
As in the case of the Iitaka fibration discussed above, the general fiber $(X_z,B_z)$ of $f$ is a $K$-trivial pair.
A special instance of this setup is the case of a minimal elliptic surface $g \colon S \rar C$, where the general fiber is an elliptic curve and $\K S. \sim g^* L_C$ for some Cartier divisor $L_C$ on the curve $C$.
In this case, Kodaira showed that one can write $L_C \sim_\Q K_C + B_C+ M_C$, where $B_C$ is a divisor measuring the singular fibers of $g$, and $M_C$ is measuring the variation of the smooth fibers \cite{Kod63}.
More precisely, $B_C$ can be explicitly computed from the multiplicities and dual graphs of the singular fibers, and $M_C= \frac{1}{12}j^* \mathcal{O}_{\pr 1.}(1)$.
Here, $j \colon C \rar \pr 1.$ is the function that detects the $j$ invariant of the smooth fibers \cite[Section IV.4]{Har77}.
Therefore, one would like to extend the work of Kodaira to the more general setup of a klt pair $f \colon (X,B) \rar Z$ with $\K X. + B \sim_\Q f^*L_Z$.
In particular, we are interested in writing $L_Z \sim_\Q \K Z. + B_Z + M_Z$, where $B_Z$ detects the singular fibers of $f$ and $M_Z$ detects the variation of the general fibers. 

Given a log canonical pair $(Y,\Gamma)$ and a $\Q$-Cartier divisor $D \geq 0$ on $Y$, we can measure ``how much of $D$'' we can add to $\Gamma$ while still preserving the log canonical property.
More precisely, we define the \emph{log canonical threshold} of $(Y,\Gamma)$ with respect to $D$ as
\[
\mathrm{lct}(Y,\Gamma;D) = \sup \lbrace t \geq 0 | (Y,\Gamma+tD) \; \mathrm{is} \; \mathrm{log} \; \mathrm{canonical} \rbrace.
\]
Since for some $c > 0$ we have $cD \geq \mathrm{Supp}(D)$, it follows that $(Y,\Gamma+c'D)$ is not log canonical for any $c' > c$.
In particular, $\mathrm{lct}(Y,\Gamma;D)$ is a well defined non-negative real number.
It turns out that, in the setup of a minimal elliptic surface $g \colon S \rar C$, we have $\mu_P (B_C) = 1 - \mathrm{lct}(S,0;g^*P)$ for every closed point $P \in C$.
In particular, under this perspective, Kodaira's algorithm to compute $B_C$ can be generalized to higher dimension.

Let $f \colon (X,B) \rar Z$ be a fibration as we considered above.
Now, we are ready to define a divisor $B_Z$ that generalizes the properties of the divisor $B_C$ computed by Kodaira.
For every prime divisor $P \subset Z$, we set the coefficient of $P$ in $B_Z$ as $\mu _P(B_Z) = 1 - \mathrm{lct}_{\eta_P}(X,B;f^*P)$.
Here $\mathrm{lct}_{\eta_P}(X,B;f^*P)$ denotes the log canonical threshold of $(X,B)$ with respect to $f^*P$ over the generic point of $P$.
The reason for this localization is twofold.
First, $P$ may not be $\Q$-Cartier, but it is at $\eta_P$ as $Z$ is normal.
Second, in this way we try to detect singularities that come from $(X,B)$, disregarding the ones coming from $P$.
Since $(X,B)$ is klt, $B_Z$ is a well defined Weil divisor.
By construction, it detects the singularities of the fibration over points of codimension 1 on the base.
Then, we can define $M_Z \coloneqq L_Z - (K_Z + B_Z)$.
Thus, we have $\K X. + B \sim_\Q f^*(\K Z. + B_Z + M_Z)$.

\begin{example}{\em
Let $X = \pr 1. \times \pr 2.$ and let $B$ be the disjoint union of two sections of $f \colon X \rar \pr 2.$.
By construction, $f \colon (X,B) \rar \pr 2.$ is an lc-trivial fibration (see the definition below).
Let $D \subset \pr 2.$ be a planar cuspidal cubic.
Then, we have $\mathrm{lct}(\pr 2.,0;D) = \frac{5}{6}.$
One can show that this implies that $\mathrm{lct}(X,B;f^*D)=\frac{5}{6}$.
This log canonical threshold is less than 1 because $f^*D$ is a $\pr 1.$-bundle over a cuspidal curve.
On the other hand, $f$ is smooth and so are its restrictions on the two sections.
Indeed, by inversion of adjunction we compute $\mathrm{lct} \subs \eta_D. (X,B;f^*D)=1$.
}
\end{example}

Let $\alpha \colon Z' \rar Z$ and $\beta \colon X' \rar X$ be projective birational morphisms.
Further, assume that the rational map $g \colon X' \drar Z'$ is a morphism.
Let $(X',B')$ be defined by $\K X'. + B' \coloneqq \beta^* (\K X. + B)$.
In general, the divisor $B'$ is not effective, but $\K X'. + B'$ shares many properties with the pair $(X,B)$.
We say that $(X',B')$ is a \emph{sub-pair}.
Since it is the pull-back of a klt pair, it is \emph{sub-klt}.
In particular, the log canonical threshold of $(X',B')$ with respect to an effective $\mathbb Q$-Cartier divisor is still well defined.
Thus, we can define a divisor $B \subs Z'.$ on $Z'$ as follows.
For every prime divisor $P' \subset Z'$, we have $\mu_{P'} (B \subs Z'.) = 1 - \mathrm{lct} \subs \eta_{P'}.(X',B',g^* P')$, where $\eta \subs P'.$ denotes the generic point of $P'$.
Then, we set $M \subs Z'. \coloneqq L \subs Z'. -(\K Z'. + B \subs Z'.)$.
By construction, we have $B_Z = \alpha_* B \subs Z'.$ and $M_Z = \alpha_* M \subs Z'.$.
In particular, b-$\mathbb{Q}$-divisors $\mathbf{B}_Z$ and $\mathbf{M}_Z$ are defined.
We refer to \cite{Cor07} for the notion of b-divisor.
We say that $\mathbf{B}_Z$ is the \emph{boundary b-divisor}, while $\mathbf{M}_Z$ is the \emph{moduli b-divisor}.
While the b-divisor $\mathbf B _Z$ is defined to detect geometric properties of the fibration $f \colon (X,B) \rar Z$, it is unclear whether $\mathbf M _Z$ has any interesting properties.

An \emph{lc-trivial fibration} $f \colon (X,B) \rar Z$ is a projective morphism with connected fibers between normal varieties such that
\begin{itemize}
    \item[(i)] $(X,B)$ is a sub-pair with coefficients in $\Q$ that is sub-log canonical over the generic point of $Z$;
    \item[(ii)] $\rk f_* \O X. (\lceil \mathbf{A}^*(X,B)\rceil)=1$; and
    \item[(iii)] there exists a $\Q$-Cartier divisor $L_Z$ on $Z$ such that $\K X. + B \sim_\Q f^* L_Z$.
\end{itemize}
We refer to \cite{FG14} for the definitions involved in the notion of lc-trivial fibration.
For the purposes of this note, it suffices to notice that the above conditions (i) and (ii) are satisfied if $(X,B)$ is a klt projective pair.
In the case that a fibration with log Calabi--Yau fibers is an lc-trivial fibration, the \emph{canonical bundle formula} describes how $\mathbf M _Z$ detects the variation of the fibers of the morphism $f$.
This is why $\mathbf{M}_Z$ is called moduli b-divisor.
The following formulation of the canonical bundle formula is \cite[Theorem 3.6]{FG14}.

\begin{canonical bundle} 
Let $f \colon (X,B) \rar Z$ be an lc-trivial fibration and let $\pi \colon Z \rar S$ be a projective morphism.
Let $\mathbf{B}_Z$ and $\mathbf{M}_Z$ be the b-divisors induced on $Z$.
Then, the b-divisor $\mathbf K _Z + \mathbf B _Z$ is b-$\Q$-Cartier.
Furthermore, the b-divisor $\mathbf{M}_Z$ is b-nef over $S$.
\end{canonical bundle}

\begin{remark}\label{remark cbf}
{\em
Since the statement of the canonical bundle formula involves the language of b-divisors, we rephrase its meaning in the case of a morphism between projective varieties.
In particular, we assume $S = \mathrm{Spec}(\C)$.
Under these assumptions, the content of the theorem is equivalent to the following.
There exists a birational morphism $\alpha \colon Z' \rar Z$ such that the divisor $M \subs Z'.$ is nef.
Furthermore, for any birational morphism $\gamma \colon Z'' \rar Z$ factoring through $Z'$ as $\sigma = \alpha \circ \rho$, we have $M \subs Z''. = \rho^* M \subs Z'.$ and $\K Z''. +  B \subs Z''. = \rho^*(\K Z'. + B \subs Z'.)$.
In particular, the birational model $g \colon (X',B') \rar Z'$ of the fibration $f \colon (X,B) \rar Z$ encodes all the information about all possible birational models of it.
Furthermore, the fact that $M \subs Z'.$ is nef should be thought as a weak analog of the fact that $M_C = \frac{1}{12}j^* \mathcal{O}_{\pr 1.}(1)$ in the case of an elliptic surface.
Indeed, thanks to work of Ambro and a subsequent generalization of Fujino and Gongyo \cites{Amb05,FG14}, something more is known about $M \subs Z'.$.
More precisely, under some technical assumptions, $M \subs Z'.$ is the pull-back of a nef and big divisor on a variety $T$.
Furthermore, $\dim (T)$ gives a Hodge-theoretic measure of the variation of the general fibers of $f$.
Given these positivity properties of $M \subs Z'.$, one can find $0 \leq D_Z \sim_\Q M_Z$ such that $(Z,\Delta_Z \coloneqq B_Z + D_Z)$ has mild singularities.
In particular, if $(X,B)$ is klt, then so is $(Z,\Delta_Z)$.
}
\end{remark}

\subsection*{Generalized pairs}
The canonical bundle formula is a very powerful tool in the study of lc-trivial fibrations.
For this reason, Birkar and Zhang defined an abstract object that encodes the properties of the outcome of the canonical bundle formula \cite{BZ}.

\begin{defn}
{\em A {\em generalized sub-pair} $(Z' \rar Z,B_Z,M\subs Z'.)$ is the datum of 
\begin{itemize}
    \item a normal variety $Z$; 
    \item a birational morphism $\alpha \colon Z' \rar Z$, where $Z'$ is normal; 
    \item a $\mathbb{Q}$-Weil divisor $B_Z$ on $Z$; and
    \item a $\Q$-Cartier divisor $M \subs Z'.$ on $Z'$ that is nef
\end{itemize}
such that $\K Z. + B_Z + M_Z$ is $\Q$-Cartier, where $M_Z \coloneqq \pi_* M \subs Z'.$. 
If $B_Z \geq 0$, we say $(Z' \rar Z,B_Z,M \subs Z'.)$ is a {\em generalized pair}.}
\end{defn}

In the case of an Iitaka fibration $f \colon X \rar Z$, $Z$ can be regarded as a generalized pair $(Z' \rar Z,B_Z,M \subs Z'.)$ with $\K Z. + B_Z + M_Z$ ample and $f^*(\K Z. + B_Z + M_Z) \sim_\Q \K X'.$.
Therefore, to discuss boundedness properties of varieties of intermediate Kodaira dimension, it is important to first address the boundedness of generalized pairs of general type.

Work of Birkar and Zhang shows that, together with $\dim (Z)$ and $\coeff(B_Z)$, one should fix the Cartier index of $M \subs Z'.$ in order to have control of the linear series $|m(\K Z. + B_Z + M_Z)|$.
Thus, fixing $n,r \in \N$, $v > 0$ and a DCC set $I \subset [0,1] \cap \Q$, it is interesting to investigate boundedness properties of generalized pairs $(Z' \rar Z,B_Z,M \subs Z'.)$ with $\dim (Z)=n$, $\vol(\K Z. + B_Z + M_Z)=v$, $\coeff(B_Z) \subset I$ and $rM \subs Z'.$ Cartier.
In this direction, \cite[Theorem 1.3]{BZ} implies that these generalized pairs are birationally bounded as pairs.
More precisely, the set consisting of $(Z,\Supp(B_Z))$ is birationally bounded.
Hence, it is natural to ask whether one can obtain honest boundedness if we further assume that $\K Z. + B_Z + M_Z$ is ample.

Surprisingly, this seems to be a hard question, as several technical difficulties come in the picture.
Since we assume that $B_Z$ has DCC coefficients, there is $\delta > 0$ such that $\delta \cdot \Supp(B_Z) \leq B_Z \leq \Supp(B_Z)$.
Thus, as in the work of Hacon, M\textsuperscript{c}Kernan and Xu, one can use intersection theoretic methods to bound $\Supp(B_Z)$.
On the other hand, $M_Z$ is just a pseudo-effective divisor, and in general, it is unclear how to bound it.
A possible approach is the following.
If we have $0 \leq H_Z \sim_\Q \K Z. + B_Z + M_Z$ with $\coeff(H_Z)$ bounded away from 0, one can bound $H_Z$.
Then, $M_Z$ is bounded up to $\Q$-linear equivalence as $H_Z - (\K Z. + B_Z)$.
An approach of this flavor is carried out in \cite{Fil18} in the case $\dim (Z) =2$, but it seems harder in general.

The second and subtler problem is the following.
Even assuming that we can choose a representative of $M_Z$ in its $\Q$-linear equivalence in order to guarantee that $(Z,B_Z+\Supp(M_Z))$ is birationally bounded, we still have no control of $M \subs Z'.$.
We illustrate this issue with the following example.
Assume that we have a set of generalized pairs $\lbrace (Z'_i \rar Z_i,B_{Z_i},M \subs Z'_i.) \rbrace \subs i \in I.$ and a projective morphism of quasi-projective varieties $(\mathcal{Z},\mathcal{D}) \rar T$ such that the following holds:
for every $(Z'_i \rar Z_i,B \subs Z_i.,M \subs Z'_i.)$ there exist a closed point $t(i) \in T$ and a birational rational map $f_i \colon Z_i \drar \mathcal{Z}_{t(i)}$ such that $\Supp(f_{i,*}B\subs Z_i.)\cup \Supp(f_{i,*}M\subs Z_i.) \cup {\rm Ex}(f_i \sups -1.) \subset \mathcal{D}_{t(i)}$.
In this situation, we may hope to find divisors $\mathcal{B}$ and $\mathcal{M}$ supported on $\mathcal{D}$ such that $f_{i,*}B\subs Z_i.=\mathcal{B}_{t(i)}$ and $f_{i,*}M\subs Z_i.=\mathcal{M}_{t(i)}$ for all $i \in I$.
Even if this is the case, we are still far away from being able to run the last part of the strategy in \cite{HMX18}, namely running a relative Minimal Model Program over $T$ and applying deformation invariance of plurigenera.
In order to apply a similar argument, we would need a condition close to the following:
there exist a birational morphism $\pi \colon \mathcal{Z}' \rar \mathcal{Z}$ and a divisor $\mathcal{M}'$ that is nef over $T$ such that $\mathcal{M}= \pi_* \mathcal{M}'$ and $\mathcal{M}'_{t(i)}$ is crepant to $M \subs Z'_i.$ for all $i \in I$.
This latter setup seems very hard to achieve in general, as given a generalized pair $(Z' \rar Z,B_Z,M \subs Z'.)$ it is hard to characterize how to optimally choose $Z'$ and how many blow-ups over $Z$ are required for such optimal choice.
In this direction, there are partial results just in dimension 2 \cite{Fil18}.

\section{Deformation invariance of plurigenera for generalized pairs} \label{def.inv.sect}

In this section, we focus on one of the steps that could possibly lead to boundedness for generalized pairs.
In birational geometry, one of the main invariants of a pair $(X,B)$ is its \emph{pluricanonical ring} $R(X,\K X. + B) \coloneqq \bigoplus \subs m \geq 0. H^0(X,m (\K X.+B))$.
Since in general $B$ is a fractional divisor, we define $H^0(X,m(\K X. + B)) \coloneqq H^0(X,\O X. (m \K X. + \lfloor m B \rfloor))$, so that $R(X,\K X. + B)$ has the structure of a graded ring.
By work of Birkar, Cascini, Hacon and M\textsuperscript{c}Kernan, we know that $R(X,\K X. + B)$ is finitely generated if $(X,B)$ is a projective klt pair \cite[Corollary 1.1.2]{BCHM}.
In particular, this guarantees that, if $\K X. + B$ is big, $\mathrm{Proj}(R(X,\K X. + B))$ recovers the \emph{canonical model} of $X$.

Therefore, when we have a family of pairs $(\mathcal{X},\mathcal{B}) \rar T$, it is natural to ask how the \emph{plurigenera} $h^0(\mathcal{X}_t,m(\K \mathcal{X}_t. + \mathcal{B}_t))$ behave as $t \in T$ varies.
A deep theorem, originally due to Siu \cite{Siu98}, states that the plurigenera are deformation invariant under mild assumptions.
For the reader's convenience, we include a version due to Hacon, M\textsuperscript{c}Kernan, and Xu that deals with the case of pairs \cite[Theorem 4.2]{HMX13}.
\begin{definv}
Let $\Xf \rar T$ be a flat projective morphism of quasi-projective varieties.
Let $(\Xf,\Delta)$ be a pair such that the fibers $(\Xf_t,\Delta_t)$ are $\Q$-factorial terminal for all $t \in T$.
Assume that every component $\mathcal{P}$ of $\Delta$ dominates $T$ and that the fibers of the Stein factorization of $\mathcal{P} \rightarrow T$ are irreducible.
Let $m > 1$ be any integer such that $\Df\coloneqq m(\K \Xf. +\Delta)$ is integral.
\newline
If either $\K \Xf. +\Delta$ or $\Delta$ is big over $T$, then $h^0(\Xf_t, \O X_t . (\Df_t))$ is independent of $t \in T$.
\end{definv}

Deformation invariance of plurigenera is a very important tool in proving the boundedness of pairs of general type.
Let $\lbrace (X_i,\Delta_i) \rbrace \subs i \in I.$ be a set of pairs with $\K X_i. + \Delta_i$ ample with fixed volume $v$ for all $i \in I$.
Assume that this set is log birationally bounded, and let $(\mathcal{X},\mathcal{B}) \rar T$ be a birationally bounding family.
Let $(\Xf_i,\Bf_i)$ denote the fiber corresponding to $(X_i,\Delta_i)$.
In order to obtain a bounding family for $\lbrace (X_i,\Delta_i) \rbrace \subs i \in I.$ from $(\mathcal{X},\mathcal{B}) \rar T$, we need to have $R(X_i,\K X_i . + \Delta_i) = R(\Xf_i,\K \Xf_i. + \Bf_i)$ for all $i$.
If that is the case, the relative canonical model of $(\mathcal{X},\mathcal{B}) \rar T$ will provide the needed family.
It is easy to show that we can guarantee $R(X_{i_0},\K X_{i_0} . + \Delta_{i_0}) = R(\Xf_{i_0},\K \Xf_{i_0}. + \Bf_{i_0})$ for a distinguished $i_0 \in I$.
By deformation invariance of plurigenera, one can show that the needed equality is satisfied for all $i \in I$.
This strategy is worked out in \cite[Proposition 7.3]{HMX18}.

In the hope that a similar strategy as above could be carried out in the setup of generalized pairs, we prove a version of deformation invariance of plurigenera for generalized pairs.
We follow the statement and proof of \cite[Theorem 4.2]{HMX13}.


\begin{theorem} \label{deformation invariance}
Let $\Xf \rar T$ be a flat projective morphism of quasi-projective varieties.
Let $(\Xf,\Delta)$ be a pair such that the fibers $(\Xf_t,\Delta_t)$ are $\Q$-factorial terminal for all $t \in T$.
Assume that every component $\mathcal{P}$ of $\Delta$ dominates $T$ and that the fibers of the Stein factorization of $\mathcal{P} \rightarrow T$ are irreducible.
Let $\Mf$ be a $\Q$-Cartier divisor that is nef over $T$.
Let $m>1$ be any integer such that $\Df\coloneqq m(\K \Xf. +\Delta +\Mf)$ is integral.
\newline
If either $\K \Xf. +\Delta + \Mf$ or $\Delta + \Mf$ is big over $T$, then $h^0(\Xf_t, \O X_t . (\Df_t))$ is independent of $t \in T$.
\end{theorem}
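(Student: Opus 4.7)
The plan is to follow the strategy of \cite[Theorem~4.2]{HMX13} closely and incorporate the nef part $\Mf$ at each step of the argument. The first reduction is standard: by upper semicontinuity of cohomology in a flat family, $t \mapsto h^0(\Xf_t, \O \Xf_t.(\Df_t))$ is upper semicontinuous on $T$, so to prove constancy it suffices to lift sections across any one fixed fiber. After shrinking $T$ around a point $t_0$ and base-changing to the spectrum of a DVR with closed point $t_0$, the remaining content is the surjectivity of the restriction map
\[
H^0(\Xf, \Df) \longrightarrow H^0(\Xf_{t_0}, \O \Xf_{t_0}.(\Df_{t_0})).
\]

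The core of the argument is an extension theorem for pluri-log-canonical sections, adapted from Siu's lifting technique as used by Hacon--M\textsuperscript{c}Kernan--Xu. In the classical setting, one extends a given section $s_0$ on $\Xf_{t_0}$ to $\Xf$ inductively: at each stage one lifts $s_0$ across a nilpotent thickening of $\Xf_{t_0}$ using a Kawamata--Viehweg-type vanishing theorem, feeding the induction with the bigness of $\K \Xf. + \Delta$ or of $\Delta$ over $T$ together with an asymptotic multiplier ideal built from sections of large multiples. In the generalized setting, one considers the analogous vanishing applied to divisors of the shape $\K \Xf. + \Delta + \Mf + \mathcal{A}$ for $\mathcal{A}$ ample over $T$; since $\Mf + \mathcal{A}$ remains ample over $T$, the vanishing goes through, and the bigness hypothesis on $\K \Xf. + \Delta + \Mf$ or on $\Delta + \Mf$ plays exactly the role that bigness of $\K \Xf. + \Delta$ or $\Delta$ played before. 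The hypotheses on components of $\Delta$ dominating $T$ with irreducible Stein-factorization fibers enter only in the tie-breaking step, which guarantees that no component of $\Delta$ specializes to a non-irreducible divisor on $\Xf_{t_0}$; this step is unaffected by the presence of $\Mf$, since $\Mf$ is not decomposed into components.

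The main obstacle is that $\Mf$ must be carried through the entire argument using only its nefness. One cannot simply reduce to \cite[Theorem~4.2]{HMX13} by replacing $\Mf$ with a $\Q$-linearly equivalent effective divisor $\Mf'\ge 0$: nefness does not in general imply effectivity up to $\Q$-linear equivalence, even when $\Delta + \Mf$ is big. For instance, on an elliptic curve, a degree-zero non-torsion divisor is nef but admits no effective $\Q$-linearly equivalent representative, and summing it with a small effective divisor still fails to produce one. Consequently, $\Mf$ must be kept intact throughout and handled as a nef ``twist'' in the vanishing theorems and asymptotic multiplier ideal sheaves underlying Siu's argument. The delicate technical point is to verify that these multiplier ideals restrict compatibly to $\Xf_{t_0}$ in the presence of the nef twist, so that the inductive extension can be iterated; this is the place where the $\Q$-factorial terminal hypothesis on the fibers plays its usual role. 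Once this is settled, the remainder of the argument parallels \cite{HMX13} with only cosmetic changes, producing the desired extension of $s_0$ to a section on $\Xf$ and hence the invariance of $h^0(\Xf_t, \O \Xf_t.(\Df_t))$.
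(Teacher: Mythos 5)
Your proposal describes a genuinely different proof strategy from the one in the paper, and it also contains a significant mischaracterization of the reference result. The paper's proof of Theorem~\ref{deformation invariance} is \emph{not} a Siu-style iterative lifting argument with asymptotic multiplier ideals: it is the MMP-style argument of \cite[Theorem~4.2]{HMX13}, in which one (i) reduces to a smooth affine curve base and to showing $|\Df_0|=|\Df|_{\Xf_0}$; (ii) uses the Nakayama--Zariski decomposition to define $\Theta_0 = \Delta_0 - \Delta_0 \wedge N_\sigma(\Xf_0, K_{\Xf_0}+\Delta_0+\Mf_0)$ (here \cite[Lemma~4.4.(2)]{BZ} is invoked to ensure this $N_\sigma$ is a genuine $\Q$-divisor, a step that has no analogue in your sketch); (iii) lifts $\Theta_0$ to $\Theta$ on $\Xf$, perturbs coefficients with a small $\delta$, and passes to a log terminal model of $(\Xf,\Theta+\Hf')$ over $T$ via \cite[Proposition~4.1]{HMX13}; (iv) applies a single Kawamata--Viehweg vanishing on a common resolution $\Wf$ of $\Xf$ and the log terminal model; and (v) concludes by a chain of linear-system comparisons. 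Nothing in this proof proceeds by lifting a fixed section across nilpotent thickenings or by building asymptotic multiplier ideals from large multiples.

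Two of your observations are on the mark and align with what the paper actually does: that $\Mf$ cannot be replaced by an effective $\Q$-linearly equivalent divisor (so the result is not a formal corollary of the pair case), and that the nef $\Mf$ is absorbed into the argument by adding a fraction of an ample divisor $\Af$, so that $\Hf\sim_\Q (1-\delta\mu+\delta)\Mf + \tfrac{\delta}{2}\Af$ and $\Hf'\sim_\Q \Mf + \tfrac{\delta}{2(m-1-\delta)}\Af$ are ample. This is indeed the only place nefness of $\Mf$ enters. However, your claim that ``the remainder of the argument parallels \cite{HMX13} with only cosmetic changes'' is not justified by your sketch, because the \cite{HMX13} argument you are parallelling is the MMP-style one, not the Siu-style one you outline; the two are structurally different. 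The ``delicate technical point'' you flag, namely that the multiplier ideals restrict compatibly to $\Xf_{t_0}$ in the presence of a nef twist, is precisely the content that a complete Siu-style proof would need to supply, and you leave it entirely unaddressed, so as written the proposal is a plan rather than a proof. If you wish to follow the paper, replace the multiplier-ideal machinery with the $N_\sigma$/log-terminal-model machinery, starting from the rationality of $N_\sigma(\Xf_0, K_{\Xf_0}+\Delta_0+\Mf_0)$; if you insist on the Siu route, you must actually carry out the inductive extension with the nef twist present, which is substantially more than cosmetic.
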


\begin{proof}
By the proof of \cite[Theorem 4.2]{HMX13}, we may assume that $T$ is a smooth affine curve and that $\Xf$ is $\Q$-factorial.
Furthermore, it is enough to show $|\Df_0| = |\Df|_{\Xf_0}$ for a special point $0 \in T$. By \cite[Lemma 4.4.(2)]{BZ}, the divisor $N_\sigma (\Xf_0, \K \Xf_0 . + \Delta_0 + \Mf_0)$ is a $\Q$-divisor. Therefore,
\[
\Theta_0 \coloneqq \Delta_0 - \Delta_0 \wedge N_\sigma (\Xf_0, \K \Xf_0 . + \Delta_0 + \Mf_0)
\]
is a $\Q$-divisor. Here $\wedge$ denotes the minimum between two divisor, taken prime component by prime component, while we refer to \cite[Definition-Lemma 3.3.1]{BCHM} for the definition of $N_\sigma$.
By assumption, there exists $0 \leq \Theta \leq \Delta$ whose restriction to $\Xf_0$ is $\Theta_0$.
Define
\[
\mu \coloneqq \frac{m}{m-1}.
\]
Then, the divisor $\K \Xf. + \mu (\Theta + \Mf)$ is big.
Therefore, we can find effective $\Q$-divisors $\Af$ and $\Bf$ such that $\Af$ is ample, $\Xf_0$ is not a component of $\Bf$, and $\K \Xf . + \mu(\Delta + \Mf) \sim_\Q \Af + \Bf$.
Up to shrinking $T$, we may assume that every irreducible component of $\Bf$ dominates $T$.
\newline
Now, we are going to perturbe the coefficients of $\Delta$ in order to apply Kawamata--Viehweg vanishing. Let $0 < \delta < \frac{1}{2}$ be a rational number, and define divisors
\[
\Xi \coloneqq (m-1-\delta)(\Delta - \Theta), \quad \Phi \coloneqq (1-\delta \mu + \delta)\Delta.
\]
Then, we can write
\begin{equation} \label{long equation}
\begin{split}
\Df - \Xi &= m(\K \Xf. + \Delta + \Mf) - (m-1-\delta)(\Delta - \Theta)\\
&= (m-1-\delta) (\K \Xf. + \Theta + \Mf) + (1+\delta)(\K \Xf. + \Delta + \Mf)\\
&= (m-1-\delta) (\K \Xf. + \Theta + \Mf) + \K \Xf. + \Delta + \Mf +\\ &\phantom{==} \delta(\K \Xf. + \mu(\Delta + \Mf)) - \delta (\mu - 1)(\Delta + \Mf)\\
& \sim_\Q  (m-1-\delta) (\K \Xf. + \Theta + \Mf) + \K \Xf. + \Delta + \Mf +\\ &\phantom{==} \delta \Af + \delta \Bf - \delta (\mu - 1)(\Delta + \Mf)\\
&= \K \Xf. + \Phi + (1-\delta \mu + \delta) \Mf +  \delta \Af + \delta \Bf + (m-1-\delta) (\K \Xf. + \Theta + \Mf)\\
& \sim_\Q  \K \Xf . + \Phi + \Hf + \delta \Bf + (m-1-\delta)(\K \Xf. + \Theta + \Hf'),
\end{split}
\end{equation}
where
\[
\Hf \sim_\Q (1- \delta \mu + \delta)\Mf + \frac{\delta}{2}\Af, \quad \Hf' \sim_\Q \Mf + \frac{\delta}{2(m-1-\delta)}\Af.
\]
Since $\Mf$ is nef, then $\Hf$ and $\Hf'$ are ample.
Therefore, we may assume that their supports are irreducible and general depending on $\delta$.
Thus, if $\delta$ is small enough, $(\Xf_t, \Delta_t + \Hf_t + \Hf'_t + \delta \Bf_t)$ is terminal for every $t \in T$.
\newline
Since $\Hf'_0$ is ample, no component of $\Theta_0 + \Hf'_0$ belongs to the stable base locus of $\K \Xf_0 . + \Theta_0 + \Hf'_0$.
Hence, by \cite[Proposition 4.1]{HMX13}, there exists a log terminal model $f \colon \Xf \dashrightarrow \Yf$ for $(\Xf,\Theta + \Hf')$ over $T$ that induces a weak log canonical model $f_0 \colon \Xf_0 \dashrightarrow \Yf_0$ of $(\Xf_0,\Theta_0 + \Hf'_0)$.
\newline
Let $p \colon \Wf \rar \Xf$ and $q \colon \Wf \rar \Yf$ resolve $f$. We may also assume that $p$ is a log resolution for $(\Xf,\Delta + \Mf + \Hf + \Hf' + \Bf)$.
Define
\[
G\coloneqq (m-1-\delta)f_* (\K \Xf. + \Theta + \Hf').
\]
Thus, $G$ is nef and big and we have
$$
(m-1-\delta) p^*(\K \Xf. + \Theta + \Hf') = q^*G + F,
$$
where $F$ is effective and $q$-exceptional.
\newline
Let $\Wf_0$ be the strict transform of $\Xf_0$.
Since $(\Xf_0,\Phi_0+ \delta \Bf_0 + \Hf_0)$ is klt, by inversion of adjunction \cite[Theorem 5.50]{KM}, $(\Xf,\Xf_0 + \Phi + \delta \Bf + \Hf)$ is plt. 
Therefore, we can write
\[
\K \Wf. + \Wf_0 = p^*(\K \Xf. + \Xf_0 + \Phi + \delta \Bf + \Hf) + E,
\]
where $\lceil E \rceil \geq 0$ is $p$-exceptional.
Now, set
\[
L\coloneqq \lceil p^*(\Df - \Xi) +E -F \rceil.
\]
Since we may assume $\Xf_0 \sim_\Q 0$, we can write
\[
\K \Wf. + \Wf_0 \sim_\Q p^*(\K \Xf. + \Phi + \delta \Bf + \Hf) + E.
\]
By \eqref{long equation}, we have
\[
p^*(\Df - \Xi) \sim_\Q q^*G +F + p^*(\K \Xf. + \Phi +  \delta \Bf + \Hf).
\]
Thus, we have
\[
\K \Wf. + q^*G \sim_\Q p^*(\Df - \Xi) +E -F - \Wf_0.
\]
This implies that
\[
L - \Wf_0 \sim_\Q \K \Wf. + C +q^*G,
\]
where $C$ is the fractional part of $-p^*(\Df - \Xi) -E + F$.
Since $C$ is supported on divisors involved in the log resolution, and its coefficients are less than 1, $(\Wf, C)$ is klt.
Therefore, Kawamata--Vieheweg vanishing implies
\[
H^1(\Wf,\O \Wf.(L-\Wf_0))=0.
\]
\newline
Let $N\coloneqq p^*(\K \Xf. +\Theta + \Mf) - q^*f_*(\K \Xf. +\Theta + \Mf)$.
Since 
\[
Q \coloneqq (\K \Xf. +\Theta + \Hf')-(\K \Xf. +\Theta + \Mf) \sim_\Q \frac{\delta}{2(m-1-\delta)}\Af
\]
is ample, the negativity lemma implies $p^* Q \leq q^*f_* Q$ \cite[Lemma 3.39]{KM}.
Therefore, we have
\[
mN = (1 + \delta) N + (m-1-\delta)N \geq F.
\]
Since $\Xi \leq m(\Delta - \Theta)$, it follows $\Df - \Xi \geq m(\K \Xf. + \Theta + \Mf)$.
Hence, we can write
\begin{equation*}
\begin{split}
R \coloneqq{}& L - \lfloor mq^*f_*(\K \Xf. + \Theta + \Mf) \rfloor \\
={}& \lceil L - mq^*f_*(\K \Xf. + \Theta + \Mf) \rceil \\
\geq{}& \lceil mN +E - F \rceil \\
\geq{}& \lceil E \rceil .
\end{split}
\end{equation*}
Let $q_0 \colon \Wf_0 \rar \Xf_0$ be the restriction of $q$, and denote by $L_0$ and $R_0$ the restrictions of $L$ and $R$ to $\Wf_0$ respectively. Then, we have
\begin{align*}
|\Df_0| &= |m(\K \Xf_0 . + \Theta_0 + \Mf_0)| & \text{by definition of } \Theta_0 \\
&\subset |mf_{0,*}(\K \Xf_0 . + \Theta_0 + \Mf_0)| & \text{as } f_0 \text{ is a birational contraction} \\
&= |mq_0^*f_{0,*}(\K \Xf_0 . + \Theta_0 + \Mf_0)| & \\
&\subset |L_0| & \text{as } R_0 \geq 0\\
&=|L|_{\Wf_0} & \text{as } H^1(\Wf,\O \Wf.(L-\Wf_0))=0\\
&\subset |\Df|_{\Xf_0} & \text{as } \lceil E \rceil \text{ is } p\text{-exceptional}.
\end{align*}
As the reversed inclusion $|\Df|_{\Xf_0} \subset |\Df_0|$ always holds, we conclude that we have the equality $|\Df|_{\Xf_0}=|\Df_0|$.
\end{proof}

\begin{remark}{\em
The above result can be generalized to the case when $\Mf$ is nef over some points $\lbrace t_i \rbrace_{i \geq 1} \subset T$: it would follow that $h^0(\Xf_i,\O \Xf_i. (\Df_{t_i}))$ is independent of $i$.
The proof is a slight modification of the above one: we have to compare $t_1$ and $t_2$ pairwise. Hence, we can base change to a smooth curve containing both of them. Then, we perform the constructions as above in order to satisfy the required properties over $t_1$ and $t_2$. Then, by openness, we can shrink the base so that the properties in the proof of Theorem \ref{deformation invariance} are satisfied. This setup is more technical, yet very useful: nefness is neither open nor closed in families.
}
\end{remark}

\section{An example of boundedness for fibrations}
\label{fibr.sect}
In this section, we show an example of how boundedness statements can be proven inductively, in the case of fibrations.
Here, we shall focus on the case of a variety $X$ endowed with an elliptic fibration $f \colon X \rar Y$, i.e., a morphism whose general fiber is a smooth elliptic curve.
Moreover, we shall assume that the variety $X$ is a minimal model with Kodaira dimension $\kappa(X)=\dim(X)-1$, in the sense that $\K X.$ is nef and 
\begin{align}
\label{eqn:kod.dim}
    h^0(X,m\K X.) \sim C m^{\dim(X)-1}+o(m^{\dim(X)-1})
\end{align} for $m$ large and divisible.
This implies that $X$ is the outcome of a run of the Minimal Model Program for a projective variety $X'$ with mild singularities -- see the statement of Theorem \ref{thm boundedness modulo flops} for the precise assumptions on singularities;
furthermore, $f \colon X \rar Y$ is the Iitaka fibration of $X$,~\cite[\S 2.1.C]{LAZ1}.
Then $\K X. \sim_\Q f^*L$ for some big and nef $\Q$-Cartier $\Q$-divisor $L$ on $Y$. 
By the $(n-1)$-volume of $\K X.$, we shall mean $\vol(Y,L)$; this is independent of the choice of the divisor $L$ in its $\mathbb Q$-linear equivalence class.

While it is well known that the fibers of $f$ vary in a bounded family, in order to understand properties of the base $Y$, a natural choice is to use the canonical bundle formula, see, for example, \cites{Amb04, Amb05}.
A priori, this gives a structure of generalized pair $(Y' \rar Y,B,M')$ on the base, and we do not have boundedness statements for generalized pairs in dimension greater than 2.
Fortunately, this problem can be circumvented as follows.
Since we are considering fibrations of relative dimension 1, a particular case of a conjecture due to Prokhorov and Shokurov implies that we can turn $M$ into an effective divisor $\Delta$ and have control of its coefficients \cite[Conjecture 7.13, Theorem 8.1]{PS09}.
Therefore, the base is endowed with a structure of a pair $(Y,\Gamma)$, and we control the coefficients of $\Gamma$.
Then, when the fibration corresponds to the Iitaka fibration, the pair $(Y,\Gamma)$ is of general type.
Thus, at least at the birational level, to understand the structure of $X$, we can first address the boundedness of the pair $(Y,\Gamma)$.

This ``divide and rule'' approach, together with some geometric assumptions on the fibration $f \colon X \rar Y$, leads to a particular form of weak boundedness.
We say that a set of varieties $\lbrace X_i \rbrace \subs i \in I.$ is \emph{bounded in codimension 1} if there is a projective morphism $\Xf \rar T$ of schemes of finite type such that every $X_i$ is isomorphic in codimension 1 to $\Xf_{t(i)}$ for some closed point $t(i) \in T$.
Furthermore, if $\Xf _{t(i)}$ is normal and projective with $\K \Xf_{t(i)}.$ $\Q$-Cartier for all $i \in I$, we say that $\lbrace X_i \rbrace \subs i \in I.$ is \emph{bounded modulo flops}.
Notice that these notions are stronger than the usual birational boundedness, as the failure of honest boundedness happens in codimension 2.

\begin{theorem} \label{thm boundedness modulo flops}
Fix a positive integer $n$, and a positive real number $v$.
Then the set $\mathfrak{D}(n,v)$ of varieties $X$ such that
\begin{enumerate}
    \item $X$ is a terminal projective variety of dimension $n$,
    \item $X$ is minimal of Kodaira dimension $n-1$,
    \item the $(n-1)$-volume of $K_X$ is $v$, and
    \item the Iitaka fibration $f \colon X \rar Y$ of $X$ admits a rational section,
\end{enumerate}
is bounded modulo flops.
\end{theorem}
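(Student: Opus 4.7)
The plan is to split the argument into two halves, a base-of-fibration half and a reconstruction half, following the \textquotedblleft divide and rule\textquotedblright\ philosophy outlined at the start of Section \ref{fibr.sect}. Let $X\in\mathfrak D(n,v)$ with Iitaka fibration $f\colon X\rightarrow Y$ admitting a rational section $\sigma$. First, since $X$ is terminal of Kodaira dimension $n-1$ and $\K X.$ is nef, semi-ampleness along the Iitaka fibration gives $\K X.\sim_{\Q}f^{*}L$ for a big and nef $\Q$-Cartier divisor $L$ on $Y$, with $\vol(Y,L)=v$ and general fiber an elliptic curve.

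The first half is to bound $Y$. The canonical bundle formula attaches to $f$ a generalized pair structure $(Y'\rightarrow Y,B_{Y},M_{Y'})$ with $L\sim_{\Q}\K Y.+B_{Y}+M_{Y}$, where $B_{Y}$ has coefficients in an explicit DCC set (depending only on $n$) determined by the singular fibers of $f$. Because the relative dimension equals one, the relevant case of the Prokhorov\textendash Shokurov conjecture \cite[Conjecture 7.13, Theorem 8.1]{PS09} lets us replace $M_{Y}$ by an effective divisor $D_{Y}\sim_{\Q}M_{Y}$ whose coefficients still lie in a fixed DCC set. This turns the base into a genuine log pair $(Y,\Gamma\coloneqq B_{Y}+D_{Y})$ of log general type, with $\K Y.+\Gamma\sim_{\Q}L$ big and nef and $\vol(Y,\K Y.+\Gamma)=v$. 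Passing to the log canonical model $(Y^{\mathrm{c}},\Gamma^{\mathrm{c}})$ (which exists by \cite{BCHM}), one gets an lc (in fact klt) pair with ample log canonical divisor, fixed volume $v$, dimension $n-1$, and coefficients of $\Gamma^{\mathrm{c}}$ in a DCC subset of $[0,1]\cap\Q$ depending only on $n$. Hacon\textendash M\textsuperscript{c}Kernan\textendash Xu's Theorem \ref{boundedness hmx} then yields a bounded family $(\Yf,\Gf)\rightarrow T$ containing all such $(Y^{\mathrm{c}},\Gamma^{\mathrm{c}})$, and hence a birationally bounded family for the bases $Y$ themselves.

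The second half reconstructs $X$ from the bounded datum on the base. The assumption that $f$ admits a rational section means that the generic fiber $X_{\eta}$ is an elliptic curve over $K(Y)$ with a rational point, so it admits a Weierstrass model. Globally, after shrinking $Y$ to the open set where $f$ is smooth and extending, one obtains a Weierstrass fibration $W\rightarrow Y$ in $\mathbb P(\mathcal O_{Y}\oplus \mathcal L^{-2}\oplus \mathcal L^{-3})$ for a line bundle $\mathcal L$ on $Y$ determined, via the canonical bundle formula and the discriminant locus, by the moduli $b$-divisor $\mathbf M _{Y}$ and the boundary $\mathbf B_{Y}$. Since we already control $\Gamma$, $L$, and the singular fiber types in DCC fashion, the coefficients/degrees of $\mathcal L$ and of the Weierstrass coefficients lie in a bounded range, so the family of Weierstrass models $\{W\}$ is bounded over a suitable stratification of $T$. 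Since $X$ and $W$ are two minimal models of the same elliptic fibration with section, they coincide with the relative canonical model of $f$ in codimension one; any difference is confined to small modifications over the codimension-two discriminant locus. This is exactly the content of \emph{boundedness modulo flops}, and it is at this stage that we cannot upgrade to honest boundedness.

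The main obstacle is the last paragraph: controlling the codimension-two geometry of the singular fibers of $f$ in a uniform way. Two terminal minimal elliptic threefolds (and more generally, varieties in $\mathfrak D(n,v)$) can differ by a flop over a curve of type $I_{k}$ or $I_{k}^{*}$ fibers, and there is no known way to bound the discrete data governing such flops in terms of $n$, $v$, and the base alone. The rational-section hypothesis is what rescues the argument in codimension one, by reducing the classification of generic fibers to Weierstrass form, but it does not rigidify the small modifications above codimension-two loci\textemdash hence the conclusion can only be phrased as boundedness modulo flops rather than boundedness in the sense of Definition \ref{def boundedness}. A secondary subtlety to check carefully is that the Prokhorov\textendash Shokurov step preserves the DCC property of the coefficients uniformly across the family, so that Theorem \ref{boundedness hmx} really applies with a fixed DCC set $I$ and fixed volume $v$.
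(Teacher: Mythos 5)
Your first half, bounding the base $Y$ via the canonical bundle formula and the Prokhorov--Shokurov result to replace the moduli part by an effective divisor with DCC coefficients, then invoking Theorem~\ref{boundedness hmx}, matches the paper's Step~0 in spirit (you pass to the log canonical model of $(Y,\Gamma)$, which is unnecessary since $K_Y+\Gamma\sim_{\mathbb Q}L$ is already ample on the Iitaka base, but this is harmless). Your second half, however, is a genuinely different route and it contains a gap that the paper's strategy is specifically designed to avoid.

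The gap is in the sentence ``Since $X$ and $W$ are two minimal models of the same elliptic fibration with section, they coincide with the relative canonical model of $f$ in codimension one; any difference is confined to small modifications over the codimension-two discriminant locus.'' This is false in general. If $X$ has a reducible fiber (say of Kodaira type $I_k$ with $k\geq 2$) over a \emph{prime divisor} of $Y$, the Weierstrass model $W$ is obtained by contracting the $k-1$ fiber components not meeting the section. These are divisorial contractions, not small modifications: $X$ and $W$ are not isomorphic in codimension one. Moreover $W$ then has canonical (not terminal) singularities, so $W$ cannot be crepant and codimension-one-isomorphic to the terminal $X$. The obstruction to honest boundedness is therefore not merely ``codimension-two geometry of singular fibers'' as you suggest; your candidate bounding family already fails to see $X$ in codimension one. (A secondary issue: the boundedness of the Weierstrass family $\{W\}$ is asserted but not proved; you would need to bound the degree of the fundamental line bundle $\mathcal L$ against the polarization on $Y$ coming from Step~0, which requires an argument.)

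The paper sidesteps Weierstrass models entirely. Writing $\hat Y$ for the closure of the rational section, it runs a $(K_X+\gamma\hat Y)$-MMP over $Y$ for $0<\gamma\ll 1$ (Step~2). Because $K_X\sim_{\mathbb Q,Y}0$ and $\hat Y$ is an irreducible horizontal divisor, no divisorial contraction can occur (a vertical exceptional divisor would have to be contained in $\hat Y$), so the output $\tilde X$ is connected to $X$ by flops --- this is exactly what is needed for ``boundedness modulo flops.'' On $\tilde X$ one then has a plt pair $(\tilde X,\tilde Y)$ (Step~3); adding $\tilde G$, a general member of $|(2n+2)\tilde f^*H|$ for a bounded polarization $H$ on $Y$, gives a $\tfrac12$-klt pair $(\tilde X,\tfrac12\tilde Y+\tfrac12\tilde G)$ with $K_{\tilde X}+\tfrac12\tilde Y+\tfrac12\tilde G$ big and nef (Step~4). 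Steps~5--6 bound its volume above and below (using differentiability of the volume along the section and an upper bound coming from the base) and then appeal to discreteness of such volumes \cite[Theorem 1.3]{Fil18b} and to boundedness of $\epsilon$-klt pairs with fixed volume \cite[Theorem 6]{1610.08932}. The advantage of the paper's route is that it never leaves the category of models isomorphic to $X$ in codimension one, whereas the Weierstrass route unavoidably does.
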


\begin{proof}
By assumption, we have $\kappa(\K X.)=n-1$, and, by \cite[Remark 1.2]{Leh13}, we have $\nu (\K X.)=\kappa (\K X.)$.
Thus, $\K X.$ is nef and abundant.
By \cite[Theorem 4.3]{GL13}, $X$ admits a good minimal model, so that, by \cite[Proposition 2.4]{Lai11}, $\K X.$ is semi-ample.
Therefore, the Iitaka fibration $f \colon X \drar Y$ is a morphism, and $Y$ is a normal $(n-1)$-fold.
Furthermore, by \cite[Lemma 5.1]{dCS17}, the rational section of $f$ is defined over a big open set of $Y$.
We now divide the proof into several steps.
\newline
{\bf Step 0:} {\it In this step we show that, if $X \in \mathfrak{D}(n,v)$, then $Y$ belongs to a bounded family only depending on $\mathfrak D (n,v)$.}
\newline
As $Y$ is the projective variety associated to the ring of sections of the canonical bundle of $X$, by \cite{FM00}, $Y$ is endowed with a natural structure of generalized klt generalized pair $(Y' \rar Y,B,M')$ and $\vol(Y,\K Y. + B + M)=v$.
As the generic fiber of $f \colon X \rar Y$ is an elliptic curve, by \cite{FM00}, the coefficients of $B$ are in a fixed DCC set $\Lambda \subset [0,1) \cap \Q$ independent of $X \in \mathfrak{D}(n,v)$.
Similarly, the Cartier index of $M'$ is independent of $X \in \mathfrak{D}(n,v)$.
Furthermore, there exists an integer $k$ depending just on $n$ such that $|kM'|$ is a free linear series, cf. \cite[Theorem 8.1]{PS09}.
Therefore, we can choose a general element $0 \leq H' \sim_\Q M'$ such that $kA'$ is a prime divisor and $(Y,B+A)$ is klt, where $A$ is the push-forward of $A'$ onto $Y$.
Thus, $(Y,B+A)$ is a klt pair of dimension $n-1$, $\coeff(B+A) \subset \Lambda \cup \lbrace \frac{1}{k} \rbrace$ and $\vol(Y,\K Y. + B + A)=v$.
By Theorem \ref{boundedness hmx}, $(Y,B+A)$ belongs to a bounded family of pairs depending just on $n, v$.
\newline
{\bf Step 1:} {\it In this step we reduce to the case when $X$ and $Y$ are $\Q$-factorial
and $f$ has a rational section which is well defined over a big open set of Y
.
}
\newline
Since $(Y,B+A)$ is klt, $Y$ admits a small $\Q$-factorialization $(Y'',B''+A'') \rar (Y,B+A)$.
By \cite{1610.08932}, also $Y''$ belongs to a bounded family which only depends on $n$ and $v$.
Let $\pi \colon X'' \rar X$ be a smooth resolution of indeterminacies for the map $X \drar Y''$.
As $X \rar Y$ is a fibration in curves, no exceptional divisor of $X'' \rar X$ dominates $Y''$.
Let $E$ denote the reduced $\pi$-exceptional divisor.
Since $X$ is terminal, we have $\K X''. + \frac{1}{2} E= \pi^* \K X. + F$, where $F \geq 0$ is supported on all of the $\pi$-exceptional divisors.
We have $\K X''. + \frac{1}{2} E \sim \subs \Q,Y''. F$. 
Now, we can run a $(\K X''. + \frac{1}{2} E)$-MMP with scaling relative to $Y''$.
As the image of $F$ on $Y''$ has codimension at least 2, $F$ is degenerate in the sense of \cite[Definition 2.8]{Lai11}.
Thus, \cite[Lemma 2.9]{Lai11} implies that this MMP terminates with a model $X''' \rar Y''$ on which the sitrct transform $F'''$ of $F$ is 0.
Thus, we have that $X'''$ is $\Q$-factorial and $\K X'''. \sim \subs \Q,Y''. 0$.
Furthermore, as we contracted all the $\pi$-exceptional divisors, $X''' \drar X$ is small.
\newline
Since $X''' \drar X$ is an isomorphism in codimension 1, it suffices to show that $X'''$ is bounded.
By construction, we have that $X''' \rar Y''$ admits a rational section.
Over the big open set of $Y''$ where $Y'' \rar Y$ is an isomorphism, $X$ and $X'''$ differ by flops over $Y$.
Thus, the rational section of $X'''\rar Y''$ is a section over a big open set of $Y''$. 
Hence, up to relabelling and assuming that $f$ has a section just over a big open of $Y$, we may assume that $X=X'''$ and $Y=Y''$.
\newline
{\bf Step 2:} {\it In this step we find a birational model of $X$ where the rational section satisfies certain positivity assumptions.}
\newline
Now, denote by $\hat Y$ the closure of the rational section of $f \colon X \rar Y$.
Then, $\hat Y$ is relatively big over $Y$.
Also, for $0 < \gamma \ll 1$, $(X,\gamma \hat Y)$ is klt.
Thus, by \cite{BCHM}, any $(\K X. + \gamma \hat Y)$-MMP over $Y$ with scaling of an ample divisor terminates.
Let $(\tilde X,\gamma \tilde Y)$ be the resulting model.
Denote by $\tilde f \colon \tilde X \rar Y$ the resulting morphism.
Notice that $\K X. + \gamma \hat Y \sim \subs \Q,Y. \gamma \hat Y$.
Thus, this MMP is independent of $\gamma$, and $\tilde Y$ is relatively big and semi-ample over $Y$.
Furthermore, since $\hat Y$ is irreducible and dominates $Y$, every step of the above MMP has to be a $(\K X. + \gamma \hat Y)$-flip.
Thus, $\tilde X$ is isomorphic to $X$ in codimension 1 and it suffices to prove that $\tilde X$ is bounded.
Moreover, as $\K X. \sim \subs \Q,Y. 0$, the terminality of $X$ implies that of $\tilde X$.
Thus, $X$ and $\tilde X$ differ by a sequence of $K_X$-flops.
\newline
{\bf Step 3:} {\it In this step we show that $(\tilde X,\tilde Y)$ is plt pair. This implies that $\tilde Y$ is a normal $\mathbb{Q}$-Gorenstein variety and that the pair $(\tilde Y, 0)$ is klt.}
\newline
Normality of $\tilde Y$ and kltness of $(\tilde Y, 0)$ will follow from the pltness of $(\tilde X,\tilde Y)$ by \cite[Proposition 5.51]{KM} and inversion of adjunction, see \cite{Kaw07}.
To show that $(\tilde X,\tilde Y)$ is plt, it suffices to show that $(\tilde X, \tilde Y)$ is log canonical and that $\tilde Y$ is its only log canonical center.
Let $\phi \colon \tilde Y ^\nu \rar \tilde Y$ be the normalization of $\tilde Y$, and let $\mathrm{Diff}(0)$ be the different defined by
\[
\K \tilde Y ^\nu. + \mathrm{Diff}(0) \coloneqq \phi^*((\K \tilde X. + \tilde Y)| \subs \tilde Y.).
\]
By construction, $\K \tilde Y ^\nu. + \mathrm{Diff}(0)$ is nef and big over $Y$.
By \cite[Lemma 5.1]{dCS17}, $\mathrm{Diff}(0)$ is exceptional over $Y$.
Thus, we have $(\tilde f \circ \phi)_*(\K \tilde Y ^\nu. + \mathrm{Diff}(0))=\K Y.$.
Since $Y$ is $\Q$-factorial, the negativity lemma \cite[Lemma 3.39]{KM} implies that
\begin{align} 
\label{eqtn_klt}
\K \tilde Y ^\nu. + \mathrm{Diff}(0) = (\tilde f \circ \phi)^* \K Y. - D, 
\end{align}
where $D \geq 0$ is $(\tilde f \circ \phi)$-excetpional.
As $(Y,B+A)$ is klt, then so is $(Y,0)$.
Therefore, it follows from \eqref{eqtn_klt} that $(\tilde Y ^\nu,\mathrm{Diff}(0))$ is klt.
Inversion of adjunction implies that $\tilde Y$ is the only log canonical center of $(\tilde X, \tilde Y)$.
In particular, $(\tilde X, \tilde Y)$ is plt and the other conclusions follow as indicated above.
\newline
{\bf Step 4:} {\it In this step 
we show that there exists an effective divisor $\tilde G$ on $\tilde X$ such that the pair $(\tilde X, \frac 1 2 \tilde Y + \frac 1 2 \tilde G)$ is $\frac 1 2$-klt, and $K_{\tilde X}+\frac 1 2 \tilde Y + \frac 1 2 \tilde G$ is big.}
%
\newline
Let $H$ be a very ample polarization on $Y$ whose existence is guaranteed by the boundedness of the pairs $(Y,B+A)$, cf. \S \ref{sect.bound}.
Moreover, by definition of boundedness, there exists a positive real number $C=C(n, v)$ such that $\vol(Y, H) \leq C$.
Let $\tilde{G}$ be a general member of $|(2n+2) \tilde f ^*H|$.
Then, $(\tilde X,\tilde{Y}+\tilde G)$ is log canonical.
On the other hand, $\tilde X$ is terminal, and the discrepancies of valuations are linear functions of the boundary divisor of a pair.
Hence, it follows that $(\tilde X, \frac{1}{2} \tilde Y + \frac{1}{2} \tilde G)$ is $\frac{1}{2}$-klt.
Since $\K \tilde X.$ is the pull-back of a big and nef divisor on $Y$, $\tilde Y$ is effective and relatively big over $Y$, it follows that $\K \tilde X. + \frac{1}{2}\tilde Y + \frac{1}{2} \tilde G$ is big.
Since we have
\[
\K \tilde X. + \frac{1}{2}\tilde Y + \frac{1}{2} \tilde G = \frac{1}{2} \K \tilde X. + \frac{1}{2} (\K \tilde X. + \tilde Y + \tilde G),
\]
and $\K \tilde X.$ is nef, it suffices to show that $\K \tilde X. + \tilde Y + \tilde G$ is nef to conclude that so is $\K \tilde X. + \frac{1}{2}\tilde Y + \frac{1}{2} \tilde G$.
Nefness of $\K \tilde X. + \tilde Y + \tilde G$ follows by the boundedness of the negative extremal rays \cite[Theorem 1.19]{Fuj14}.
Indeed, let $R$ be a $(\K \tilde X. + \tilde Y)$-negative extremal ray.
There exists a rational curve $C$ spanning $R$ such that $-2n \leq (\K \tilde X. + \tilde Y)\cdot C < 0$.
Since $\K \tilde X. + \tilde Y$ is nef relatively to $Y$, then $\tilde f (C)$ is a curve.
In particular, we have $G \cdot C \geq (2n+2)H \cdot \tilde f (C) \geq 2n+2$.
So, it follows that $\K \tilde X. + \tilde Y + \tilde G$ is non-negative on every $(\K \tilde X. + \tilde Y)$-negative extremal ray.
Thus, $\K \tilde X. + \tilde Y + \tilde G$ is nef.
In particular, we have that $\K \tilde X. + \frac{1}{2}\tilde Y + \frac{1}{2} \tilde G$ is nef and big.
\newline
{\bf Step 5:} {\it In this step we show that there exist positive constants $C_1$ and $C_2$, only depending on $n$ and $v$, such that $C_1 \leq (\K \tilde X. + \frac{1}{2}\tilde Y + \frac{1}{2} \tilde G)^n \leq C_2$.}
\newline
The existence of $C_1$ follows from \cite[Theorem 1.3]{HMX14b}.
Thus, we are left to show the existence of $C_2$.
Now, by the differentiability of the volume function \cite{LM09}, we have
\[
\left. \frac{d}{dt} \right| \subs t=s. \vol(\K\tilde X. + t\tilde{Y} + \tilde{G}) = \vol\subs \tilde X|\tilde{Y}. (\K \tilde X. + s \tilde{Y} + \tilde{G}),
\]
where $\vol \subs \tilde{X}|\tilde{Y}.$ denotes the restricted volume function \cite{MR2530849}.
Furthermore,
\begin{align*}
\begin{split}
\K \tilde{X}. + s\tilde{Y}&= s(\K \tilde{X}. + \tilde{Y}) + (1-s) \K \tilde{X}.\\&=s(\K \tilde{X}. + \tilde{Y}) + (1-s)(\tilde{f})^*(\K Y. + B + A). 
\end{split}
\end{align*}
Thus,
\begin{align*}
\begin{split}
\vol\subs \tilde{X}|\tilde{Y}. (\K \tilde{X}. + s \tilde{Y} + \tilde{G})  &\leq \vol ((\K \tilde{X}. + s \tilde{Y} + \tilde{G})| \subs \tilde{Y}.))\\
&= \vol (s(\K \tilde{Y}. + \Diff(0))+(\tilde{g})^*((1-s)(\K Y. + B + A)+(2n+2)H) \\
&=\vol (\K Y. + (1-s)(\K Y. + B + A)+(2n+2)H) \leq C_2,
\end{split}
\end{align*}
where we set $\tilde{g} \colon \tilde{Y} \rar Y$, and $C_2$ only depends on $n,v$.
Then, we conclude that
\[
\vol(\K \tilde{X}. + \tilde{Y} + \tilde{G}) = \int_0^1 \left. \frac{d}{dt} \right| \subs t=s. \vol(\K \tilde{X}. + t \tilde{Y} + \tilde{G})ds \leq C_2.
\]
Since,
\[
\vol\left(\K \tilde X. + \frac{1}{2} \tilde Y + \frac{1}{2} \tilde G\right) \leq \vol (\K \tilde{X}. + \tilde{Y} + \tilde{G}),
\]
the claim follows.
\newline
{\bf Step 6:} {\it In this step we conclude the proof.}
\newline
As showed in the previous steps, $(\tilde X, \frac{1}{2}\tilde Y + \frac{1}{2} \tilde G)$ is $\frac{1}{2}$-klt and its coefficients belong to the set $\lbrace \frac{1}{2} \rbrace$.
Thus, by \cite[Theorem 1.3]{Fil18b}, $\vol\left(\K \tilde X. + \frac{1}{2} \tilde Y + \frac{1}{2} \tilde G\right)$ belongs to a discrete set only depending on $n$ and $v$.
By Step 5, this volume is also bounded from above and below.
Thus, we conclude that $\vol\left(\K \tilde X. + \frac{1}{2} \tilde Y + \frac{1}{2} \tilde G\right)$ attains only finitely many values, only depending on $n$ and $v$.
Then, by \cite[Theorem 6]{1610.08932}, the set of pairs $(\tilde X, \frac{1}{2}\tilde Y + \frac{1}{2} \tilde G)$ is log bounded.
In particular, the varieties $\tilde{X}$ are bounded.
This concludes the proof.
\end{proof}

\subsection*{Kawamata--Morrison conjecture and boundedness} 
The statement of Theorem \ref{thm boundedness modulo flops} provides evidence that the ``divide and rule'' approach can lead to boundedness statements for Calabi--Yau fibrations.
Given a fibration $f \colon X \rar Y$ as in the statement of Theorem \ref{thm boundedness modulo flops}, and letting $g \colon X' \rar Y'$ be the model constructed in the proof that belongs to a bounded family, then $Y'$ is a small $\Q$-factorialization of $Y$.
For simplicity, we assume that $Y=Y'$ and $X$ is $\Q$-factorial.
Thus, both $X$ and $X'$ are two minimal models for $f \colon X \rar Y$.
Let $\alpha \colon X \drar X'$ be the induced rational map.
Since $\alpha$ is an isomorphism in codimension 1, the divisors on $X$ and $X'$ are naturally identified.
In particular, we get an isomorphism $\alpha_* \colon N^1(X/Y) \rar N^1(X'/Y)$ between the vector spaces of $\R$-Cartier divisors modulo numerical equivalence over $Y$.
Under this morphism, we get identifications $\alpha_* \overline{\mathrm{Eff}}(X/Y)=\overline{\mathrm{Eff}}(X'/Y)$ and $\alpha_* \overline{\mathrm{Mov}}(X/Y)=\overline{\mathrm{Mov}}(X'/Y)$ between the closures of the relative cones of effective and movable divisors respectively \cite{Kaw97}.
On the other hand, $\alpha_* \mathrm{Nef}(X/Y)$ is not in general mapped to $\mathrm{Nef}(X'/Y)$, unless $\alpha$ is an isomorphism.
More precisely, we have that either $\alpha_* \mathrm{Nef}(X/Y)=\mathrm{Nef}(X'/Y)$, or $\alpha_* \mathrm{Int}(\mathrm{Nef}(X/Y)) \cap \mathrm{Int}(\mathrm{Nef}(X'/Y))= \emptyset$, where ${\rm Int}$ indicates the interior of a set; the first case occurs if and only if $\alpha$ is an isomorphism \cite[Lemma 1.5]{Kaw97}.
Then, $\overline{\mathrm{Mov}}(X/Y)$ can be decomposed into chambers, each one corresponding to $\mathrm{Nef}(X'/Y)$ for some model $X'$ isomorphic to $X$ in codimension 1.
Therefore, to study all the possible minimal models of $f \colon X \rar Y$ we should analyze the cones ${\mathrm{Mov}}(X/Y)$ and $\mathrm{Nef}(X/Y)$.
It could happen that a minimal model $X'$ is isomorphic to $X$, while the rational map over $Y, \; \alpha \colon X \drar X'$ is not an isomorphism, cf. \cite[Example 3.8.(2)]{Kaw97}.
Thus, we may have more chambers corresponding to the same isomorphism class of varieties.

In the setup of Theorem \ref{thm boundedness modulo flops}, a first step towards proving the boundedness of the initial input $f \colon X \rar Y$ would be to show that there are just finitely many relative minimal models $g \colon X' \rar Y$.
This is exactly the content of the Kawamata--Morrison cone conjecture.
\begin{conconj}[Kawamata--Morrison]
\cite[Conjecture 2.1]{Tot10}
\label{km.conj}
Let $f \colon X \rar Y$ be a projective morphism with connected fibers between normal varieties. Let $(X,\Delta)$ be a klt pair such that $\K X. + \Delta \equiv 0/Y$. Also, define ${\rm Nef}^e(X/Y) \coloneqq {\rm Nef}(X/Y) \cap {\rm Eff}(X/Y)$ and ${\rm Mov}^e(X/Y) \coloneqq {\rm Mov}(X/Y) \cap {\rm Eff}(X/Y)$. Then, the following holds.
\begin{itemize}
    \item[1] The number of ${\rm Aut}(X/Y,\Delta)$-equivalence classes of faces of the cone ${\rm Nef}^e(X/Y)$ corresponding to birational contractions or fiber space structures is finite. Moreover, there exists a rational polyhedral cone $\Pi$ which is a fundamental domain for the action of ${\rm Aut}(X/Y,\Delta)$ on ${\rm Nef}^e(X/Y)$ in the sense that
    \begin{itemize}
        \item [a] ${\rm Nef}^e(X/Y)=\bigcup \subs g \in {\rm Aut}(X/Y,\Delta). g_* \Pi$; and
        \item[b] ${\rm Int}\Pi \cap g_*{\rm Int}\Pi = \empty$ unless $g_*=1$.
    \end{itemize}
    \item[2] The number of ${\rm PsAut}(X/Y,\Delta)$-equivalence classes of chambers ${\rm Nef}^e(X'/Y)$ in ${\rm Mov}^e(X/Y)$ corresponding to marked small $\Q$-factorial modifications $X' \rar Y$ of $X \rar Y$ is finite. Equivalently, the number of isomorphism classes over $Y$ of small $\Q$-factorial modifications of $X$ over $Y$ (ignoring the birational identification with $X$) is finite. Moreover, there exists a rational polyhedral cone $\Pi'$ which is a fundamental domain for the action of ${\rm PsAut}(X/Y,\Delta)$ on ${\rm Mov}^e(X/Y)$.
\end{itemize}
\end{conconj}
This is a very deep conjecture connecting the birational geometry of a log Calabi--Yau fibration to the structure of the (birational) automorphism group.
The intuition behind such connection is rooted in mirror symmetry and physics, see, for example, \cite{MR1265317}, but it is still unclear how exactly to determine the existence of automorphism starting from the geometry of the cone of divisors.
Conjecture \ref{km.conj} is known to hold just in very few cases: Totaro proved it in dimension 2, \cite{Tot10}, Kawamata proved the relative case for threefolds without boundary, \cite{Kaw97}, and there are a few other cases known in dimension $>2$.

Assuming the Kawamata--Morrison cone conjecture, one could hope to explore the following approach in order to improve the statement of Theorem \ref{thm boundedness modulo flops} to actual boundedness.
First, one would need to show that the number of models of $f$ connected by relative flops is bounded in a family and provides a constructible function on the base.
Once this is settled, in order to achieve boundedness, one would need to argue that one can extend flops from a general fiber to an open set over the base.
If this were the case, by finitely many flops of the birationally bounding family one would recover a bounding family for the initial moduli problem, as shown in \cite{1610.08932} for the log general type case.


\bibliographystyle{alpha}
\bibliography{bib} 

\end{document}